\newtheorem{theorem}{Theorem}[section]
\newtheorem{lemma}[theorem]{Lemma}
\newtheorem{corollary}[theorem]{Corollary}
\newtheorem{proposition}[theorem]{Proposition}
\theoremstyle{definition}
\newtheorem{definition}[theorem]{Definition}
\theoremstyle{remark}
\newtheorem{remark}[theorem]{Remark}
\numberwithin{equation}{section}
\author[A. Baza]{Abderrahman Baza$^{1}$}
\address{$^{1}$Departement of Mathematics,  Ibn Tofail University, Kenitra,  Morocco}
\email{abderrahmane.baza@gmail.com}
\author[M. Rossafi]{Mohamed Rossafi$^{2}$}
\address{$^{2}$Departement of Mathematics,  Ibn Tofail University, Kenitra,  Morocco}
\email{rossafimohamed@gmail.com}
\author[C.  Park]{Choonkil Park$^{3*}$}
\address{$^{3}$Research Institute for Natural Sciences, Hanyang University, Seoul 04763, Republic of Korea}
\email{baak@hanyang.ac.kr}
\thanks{$^{*}$Corresponding author}
\title[Stability of additive-quadratic functional equation]{Stability of additive-quadratic functional equation in modular space}
\subjclass[2010]{Primary 39B82, Secondary 39B52}
\keywords{Hyers-Ulam stability, radical cubic functional inequalities, modular sapaces, fuzzy Banach, $\Delta_2$-condition.}
\begin{document}
	\begin{abstract}
		Using the direct method, we prove the generalised Hyers-Ulam stability of the following functional equation
		\begin{equation}\label{Eq-1}
			\phi(x+y, z+w)+\phi(x-y, z-w)-2 \phi(x, z)-2 \phi(x, w)=0
		\end{equation}
		in modular space satisfying the Fatou property or $\Delta_2$-condition.\\
	\end{abstract}

	\maketitle
	\section{Introduction and preliminaries}
	Nakano \cite{Nakano-1950} looked into the concept of modular linear spaces in 1950. Many authors, including Luxemburg \cite{Luxemburg}, Amemiya \cite{Amemiya}, Musielak \cite{Musielak}, Koshi \cite{Koshi}, Mazur \cite{Mazur}, Turpin \cite{Turpin}, and Orlicz \cite{Orlicz}, have now thoroughly proved these hypotheses. There are numerous uses for Orlicz spaces \cite{Orlicz} and the idea of interpolation \cite{Orlicz} in the context of modular spaces.
	As suggested by Khamsi \cite{Khamsi}, several scholars used a fixed point approach of quasi- contractions to test stability in modular spaces without using the 2-condition. Recent results on the stability of various functional equations combining the $\Delta_2$-condition and the Fatou property were reported by Sadeghi \cite{Sadeghi}.
	Firstly, we review some vocabulary, notations, and common characteristics of the theory of 	Modulars and modular spaces.
\begin{definition}\label{Definition1.1}
	Let $Y$ be an arbitrary vector space. A functional $\rho: Y \rightarrow[0, \infty)$ is called a modular if for arbitrary $u, v \in Y$;
	\begin{enumerate}
		\item $\rho(u)=0$ if and only if $u=0$.\label{item1}
		\item $\rho(\alpha u)=\rho(u)$ for every scalar $\alpha$ with $|\alpha|=1$.
		\item $\rho(\alpha u+\beta v) \leq \rho(u)+\rho(v)$ if and only if $\alpha+\beta=1$ and $\alpha ,\beta \geq 0$.\\
		If \eqref{item1} is replaced by:
		\item $\rho(\alpha u+\beta v) \leq \alpha \rho(u)+\beta \rho(v)$ if and only if $\alpha+\beta=1$ and $\alpha, \beta \geq 0$, then we say that $\rho$ is a convex modular.
		\item 
		It is said that the modular $\rho$ has the Fatou property if and only if $\rho(x) \leq \liminf_{n\to\infty}\rho(x_n)$ whenever $x=\rho-\lim_{n\to\infty}x_n$.
	\end{enumerate}
	A modular $\rho$ defines a corresponding modular space, i.e., the vector space $Y_\rho$ given by:
	\begin{equation*}
		Y_\rho=\{u \in Y: \rho(\lambda u) \rightarrow 0 \text { as } \lambda \rightarrow 0\}.
	\end{equation*}
	A function modular is said to satisfy the $\Delta_2$-condition if there exist $\tau > 0$ such that $\rho(2 u) \leq \tau \rho(u)$ for all $u \in Y_\rho$.
\end{definition}
\begin{definition}
	Let $\left\{u_n\right\}$ and $u$ be in $Y_\rho$. Then:
	\begin{enumerate}
		\item 
		The sequence $\left\{u_n\right\}$, with $u_n \in Y_\rho$, is $\rho$-convergent to $u$ and write: $u_n \to  u$ if $\rho\left(u_n-u\right)\to 0$ as $n \rightarrow \infty$.
		\item
		The sequence $\{u_n\}$, with $u_n \in Y_\rho$, is called $\rho$-Cauchy if $\rho\left(u_n-u_m\right)\to 0$ as $n, m \to \infty$.
		\item
		$Y_\rho$ is called $\rho$-complete if every $\rho$-Cauchy sequence in $Y_\rho$ is $\rho$-convergent.
	\end{enumerate}
\end{definition}
\begin{proposition}
	In modular space,
	\begin{itemize}
		\item If $u_n \overset{\rho}{\to} u$ and a is a constant vector, then $u_n+a \overset{\rho}{\to} u+a$.
		\item If $u_n \overset{\rho}{\to} u$ and $v_n \overset{\rho}{\to} v$ then $\alpha u_n + \beta v_n \overset{\rho}{\to} \alpha u+ \beta v$, where $\alpha+\beta \leq 1$ and $\alpha,\beta \geq 1$. 
	\end{itemize}
\end{proposition}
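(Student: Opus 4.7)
The plan is to treat the two bullets separately: the first is an immediate one-line computation, while the second reduces, via a small subconvexity observation, to item 3 of Definition \ref{Definition1.1}.

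For the first bullet, I would simply observe that $(u_n+a)-(u+a)=u_n-u$, so $\rho((u_n+a)-(u+a))=\rho(u_n-u)\to 0$ by the hypothesis $u_n\overset{\rho}{\to}u$. No axiom of $\rho$ beyond the definition of $\rho$-convergence is required here.

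For the second bullet, I would first rewrite $(\alpha u_n+\beta v_n)-(\alpha u+\beta v)=\alpha(u_n-u)+\beta(v_n-v)$, so it suffices to show $\rho\bigl(\alpha(u_n-u)+\beta(v_n-v)\bigr)\to 0$. The tool is item 3 of Definition \ref{Definition1.1}, which gives $\rho(\alpha w+\beta z)\le \rho(w)+\rho(z)$ whenever $\alpha+\beta=1$ and $\alpha,\beta\ge 0$; interpreting the printed condition ``$\alpha,\beta\ge 1$'' as the intended ``$\alpha,\beta\ge 0$'' (the only reading compatible with $\alpha+\beta\le 1$), the boundary case $\alpha+\beta=1$ is immediate. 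To cover $\alpha+\beta<1$, I would first establish the auxiliary scaling bound $\rho(sx)\le\rho(x)$ for every $s\in[0,1]$ by writing $sx=sx+(1-s)\cdot 0$ and applying item 3 with the coefficient pair $(s,1-s)$ to obtain $\rho(sx)\le\rho(x)+\rho(0)=\rho(x)$. Setting $s=\alpha+\beta$, $a=\alpha/s$, $b=\beta/s$ (so that $a+b=1$ and $a,b\ge 0$), I would then chain
\[
\rho\bigl(\alpha(u_n-u)+\beta(v_n-v)\bigr)=\rho\bigl(s(a(u_n-u)+b(v_n-v))\bigr)\le\rho(a(u_n-u)+b(v_n-v))\le\rho(u_n-u)+\rho(v_n-v)\to 0,
\]
with the trivial case $s=0$ making everything vanish.

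The main obstacle, such as it is, is purely bookkeeping: item 3 is stated only on the simplex $\alpha+\beta=1$, so one must reduce a proper subconvex combination to that edge through the scaling inequality $\rho(sx)\le\rho(x)$ for $s\in[0,1]$. Once that auxiliary estimate is in hand, the rest of the argument is formal; no appeal to $\rho$-completeness, the Fatou property, or the $\Delta_2$-condition is needed.
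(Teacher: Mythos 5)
Your argument is correct, and it supplies a proof that the paper itself omits: the proposition is stated in the preliminaries without any justification. The first bullet is indeed the one-line identity $(u_n+a)-(u+a)=u_n-u$. For the second bullet, your reading of the hypothesis as $\alpha,\beta\ge 0$ is the only sensible one, and your reduction is sound: the auxiliary bound $\rho(sx)\le\rho(x)+\rho(0)=\rho(x)$ for $s\in[0,1]$ is legitimate because item (1) of Definition \ref{Definition1.1} gives $\rho(0)=0$, and it is in fact the same monotonicity observation the paper records in the Remark immediately following the proposition (there phrased as $\rho(au)=\rho(\tfrac{a}{b}bu)\le\rho(bu)$ for $0<a<b$, via the modular axiom with $v=0$). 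Rescaling to the edge $a+b=1$ and applying item (3) then closes the argument, and you are right that neither completeness, the Fatou property, nor the $\Delta_2$-condition plays any role. The only cosmetic remark is that item (3) is written in the paper with an ``if and only if'' that should be read as ``for all $\alpha,\beta$ with $\alpha+\beta=1$, $\alpha,\beta\ge 0$''; you have used it in the intended direction.
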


\begin{remark}
	Note that $\rho(u)$ is an increasing function, for all $u \in X$. 
	Suppose $0<a<b$, then property $(4)$ of Definition \ref{Definition1.1} with $v=0$ shows that $\rho(a u)=\rho\left(\dfrac{a}{b} b u\right) \leq \rho(b u)$ for all $u \in Y$. 
	Morever, if $\rho$ is a convexe modular on $Y$ and $|\alpha| \leq 1$, then $\rho(\alpha u) \leq \alpha \rho(u)$.
	
	In general, if $\lambda_i \geq 0$, $i=1, \dots,n$ and  $\lambda_1,\lambda_2,\dots,\lambda_n \leq1$ then $\rho (\lambda_1 u_1+\lambda_2 u_2+\dots+\lambda_n u_n) \leq \lambda_1 \rho(u_1)+\lambda_2 \rho(u_2)+\dots+\lambda_n \rho(u_n)$.
	
	If $\{u_n\}$ is $\rho$-convergent to $u$, then $\{ c u_n \}$ is $\rho$-convergent to $cu$, where $|c| \leq 1$.
	But the $\rho$-convergent of a sequence $\{u_n\}$ to $u$ does not imply that $\{\alpha u_n\}$ is $\rho$-convergent to $\alpha u_n$ for scalars $\alpha $ with $|\alpha|>1$.
	
	If $\rho$ is a convex modular satisfying $\Delta_2$ condition with $0 <\tau<2$, then $\rho(u) \leq \tau \rho(\dfrac{1}{2} u) \leq \dfrac{\tau}{2} \rho(u)$ for all $u$.\\
	Hence $\rho=0$. Consequently, we must have $\tau \geq 2$ if $\rho$ is convex modular. 
\end{remark}

	The functional equations are crucial for the research of stability issues in a variety of contexts. Ulam \cite{Ulam-1960} was the first to question the stability of group homomorphisms.
	We refer to an equation as being stable if it only permits one distinct solution. Ulam \cite{Ulam-1960} developed the next Equation of Cauchy function. Hyers \cite{Hyers-1941} addressed Cauchy's functional equation in the setting of a Banach space to overcome this issue. Aoki \cite{Aoki-1950} improved the work of Hyers by taking an unbounded Cauchy difference.
	In his study, Rassias \cite{Rassias-1978} examined additive mappings, and Gavruta \cite{Gavruta-1994} has already provided the same conclusions. See \cite{Charifi-2018,Hyers-1941} for further information regarding the stability results.\\
in this paper wich is constituted of 3 sections, we study the Hyers Ulam stability of the following functional equation:
\begin{equation}
		\phi(x+y, z+w)+\phi(x-y, z-w)-2 \phi(x, z)-2 \phi(x, w)=0 \text { for all } x, y, z, w \in X.
	\end{equation}
in modular space satisfying the Fatou property or with $\Delta_2$-condition.		
	\section{Hyers Ulam stability of \eqref{Eq-1} in modular space satisfying the Fatou property}
	\begin{lemma}[\cite{Hwang}]\label{Lemma2.1}
		Let $X$ and $Y$ be vector spaces and $\phi: X^2 \rightarrow Y$ be a mapping satisfies: $\phi(0, z)=\phi(x, 0)=0$ and
		$$
		\phi(x+y, z+w)+\phi(x-y, z-w)-2 \phi(x, z)-2 \phi(x, w)=0 \text { for all } x, y, z, w \in X.$$ 
		Then $\phi$ is additive in the first variable and quadratic in the second variable.
		The mapping $\phi$ is called an additive-quadratic mapping.
	\end{lemma}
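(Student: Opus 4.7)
The plan is to reduce the given four-variable identity to two independent one-variable functional equations by specializing $y$ and $w$, and then to invoke standard facts about the Jensen and the quadratic equations.

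First, I would set $y = 0$ in the identity. The substitution directly gives
\[
\phi(x, z+w) + \phi(x, z-w) = 2\phi(x, z) + 2\phi(x, w),
\]
which, for each fixed $x$, is precisely the quadratic (parallelogram) equation for the map $z \mapsto \phi(x, z)$. Together with the boundary condition $\phi(x, 0) = 0$, this already shows that $\phi$ is quadratic in the second variable.

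Next, I would set $w = 0$, using $\phi(x, 0) = 0$ to kill one of the terms. This yields
\[
\phi(x+y, z) + \phi(x-y, z) = 2\phi(x, z),
\]
the Jensen equation in the first variable with $z$ fixed. Setting $x = 0$ and using $\phi(0, z) = 0$ gives $\phi(-y, z) = -\phi(y, z)$, so $\phi$ is odd in the first slot. Swapping the roles of $x$ and $y$ in the Jensen relation and applying this oddness produces $\phi(x+y, z) - \phi(x-y, z) = 2\phi(y, z)$; adding this to the original Jensen relation and dividing by $2$ then yields the Cauchy equation $\phi(x+y, z) = \phi(x, z) + \phi(y, z)$, i.e., additivity in the first variable.

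The main (and essentially the only) obstacle is recognizing the right specializations: the vanishing boundary conditions $\phi(0, z) = \phi(x, 0) = 0$ must collapse exactly the unwanted terms on the right-hand side, which is why the two substitutions $y = 0$ and $w = 0$ are the natural ones. Once the right one-variable equations have been isolated, the argument reduces to the classical equivalence of Jensen's equation (with vanishing at $0$) with Cauchy additivity, and to the standard reading of the parallelogram identity as defining quadratic maps; no further subtlety should be required.
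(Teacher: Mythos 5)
Your argument is correct and is exactly the standard reduction used in the cited source \cite{Hwang} (the paper itself states Lemma \ref{Lemma2.1} without proof): putting $y=0$ isolates the quadratic equation in the second variable, putting $w=0$ (with $\phi(x,0)=0$) isolates Jensen's equation in the first variable, and oddness from $\phi(0,z)=0$ upgrades Jensen to Cauchy additivity. No gaps.
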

	
	\begin{theorem}
		\label{Theorem2.2}
		Let $X$ be a vector space, and $Y_\rho $ be a $\rho$-complete convex modular space satisfying the Fatou property. 
		Let $\alpha : X^2 \longrightarrow[0, \infty)$ be a function such that
		\begin{equation}\label{Eq-2}
			\varphi(x, y)=\sum_{j=1}^{\infty} \frac{1}{2^j} \alpha\left(2^{j-1} x, 2^{j-1} y\right)<\infty
		\end{equation}
		and 
		\begin{equation}\label{Eq-3}
			\psi(x, y)=\sum_{j=1}^{\infty} \frac{1}{4^j} \alpha\left(2^{j-1} x, 2^{j-1} y\right)<\infty
		\end{equation}
		
	\end{theorem}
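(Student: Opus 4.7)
The plan is to apply the direct (Hyers) method, building the target additive-quadratic mapping as a $\rho$-limit of a rescaled sequence of iterates of $\phi$. The presence of two series, $\varphi$ with denominator $2^j$ and $\psi$ with denominator $4^j$, strongly suggests that the theorem either decomposes $\phi$ into additive and quadratic components or that two rescalings are in play. I will describe the quadratic branch, which pairs naturally with $\psi$; the additive branch proceeds analogously via the $\varphi$-series.

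First, I substitute $y = x$ and $w = z$ in the approximate version of \eqref{Eq-1}. Using $\phi(0,0) = 0$, this yields the key inequality
\begin{equation*}
\rho\bigl(\phi(2x, 2z) - 4\phi(x, z)\bigr) \leq \alpha(x, z).
\end{equation*}
Applying the convex modular estimate $\rho(cu) \leq c\rho(u)$ for $0 \leq c \leq 1$, division by $4$ gives $\rho\bigl(\tfrac{1}{4}\phi(2x,2z) - \phi(x,z)\bigr) \leq \tfrac{1}{4}\alpha(x,z)$. I then show that the sequence $\Phi_n(x,z) := 4^{-n}\phi(2^n x, 2^n z)$ is $\rho$-Cauchy. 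A telescoping argument combined with the convexity estimate $\rho\bigl(\sum \lambda_i u_i\bigr) \leq \sum \lambda_i \rho(u_i)$ (valid for nonnegative $\lambda_i$ summing to $1$) bounds $\rho(\Phi_{n+m} - \Phi_n)$ by a tail of the series defining $\psi(x,z)$, which tends to $0$ by \eqref{Eq-3}. By $\rho$-completeness, $\Phi_n$ $\rho$-converges to some $\Phi$.

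Next, I verify that $\Phi$ satisfies \eqref{Eq-1} exactly. Writing the approximate equation at $(2^n x, 2^n y, 2^n z, 2^n w)$, dividing by $4^n$, and invoking the Fatou property yields
\begin{equation*}
\rho\bigl(\Phi(x+y,z+w) + \Phi(x-y,z-w) - 2\Phi(x,z) - 2\Phi(x,w)\bigr) \leq \liminf_{n\to\infty} \frac{1}{4^n}\alpha(2^n x, 2^n z) = 0,
\end{equation*}
where the last equality is forced since this is the general term of the convergent series $\psi$. By Lemma~\ref{Lemma2.1}, $\Phi$ is additive-quadratic. The approximation bound $\rho(\phi(x,z) - \Phi(x,z)) \leq \psi(x,z)$ then follows from the Fatou property applied to the finite partial telescoping estimates. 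Finally, uniqueness comes from assuming another such $\Phi'$ exists and noting that $\Phi(2^n x, 2^n z) = 4^n \Phi(x,z)$ (and similarly for $\Phi'$) forces $\rho(\Phi - \Phi')$ to be controlled by the tails of $\psi$ after pushing the difference out to infinity.

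The principal obstacle is that the modular is not subadditive in the usual Banach-space sense: every estimate of a sum must be rewritten as a convex combination $\sum \lambda_i u_i$ with $\sum \lambda_i = 1$. This makes the book-keeping of the telescoping sums more delicate than the corresponding Banach-space argument, since at each step the split coefficients must be chosen to mesh with the $4^{-j}$ (respectively $2^{-j}$) factors of the defining series. A secondary subtlety is that, if the full statement decomposes $\phi$ into additive and quadratic summands, one must carry both rescalings simultaneously, obtaining two limits $A$ and $Q$ from the $\varphi$- and $\psi$-series respectively, and verify via the Fatou property that neither piece contaminates the other.
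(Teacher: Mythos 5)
Your general scheme (direct method, convex-combination estimates for $\rho$, Fatou property for the error bound, Lemma~\ref{Lemma2.1} to identify the limit) is the right skeleton, and you correctly sensed that ``two rescalings are in play''; but the rescaling you actually chose is the wrong one, and this is a substantive gap rather than book-keeping. You iterate $\Phi_n(x,z)=4^{-n}\phi(2^nx,2^nz)$, doubling \emph{both} variables while dividing by $4$. A map that is additive in the first variable and quadratic in the second satisfies $H(2x,2z)=8H(x,z)$, so the homogeneity $\Phi(2x,2z)=4\Phi(x,z)$ that your limit inherits forces $\Phi=0$ once Lemma~\ref{Lemma2.1} is applied: the construction cannot output a nontrivial approximant. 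Moreover the telescoping error it produces, $\sum_j 4^{-j}\alpha(2^{j-1}x,2^{j-1}x)\,\alpha(2^{j-1}z,2^{j-1}z)$, is neither $\varphi$ nor $\psi$ and does not yield the bound $\min\{\varphi(x,x)\alpha(z,0),\alpha(x,0)\psi(z,z)\}$ asserted in \eqref{Eq-5}. (Relatedly, the control in \eqref{Eq-4} is the \emph{product} $\alpha(x,y)\alpha(z,w)$; your substitution $y=x$, $w=z$ gives right-hand side $\alpha(x,x)\alpha(z,z)$, not $\alpha(x,z)$, and the quantity $4^{-n}\alpha(2^nx,2^ny)\alpha(2^nz,2^nw)$ you need to kill in the limit is not ``the general term of $\psi$''.)

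The paper's proof scales one variable at a time, which is exactly what makes the two separate series appear. Putting $y=x$, $w=0$ gives $\rho(\phi(2x,z)-2\phi(x,z))\le\alpha(x,x)\alpha(z,0)$; iterating only the first argument yields $A(x,z)=\rho\text{-}\lim_n 2^{-n}\phi(2^nx,z)$ with error $\varphi(x,x)\alpha(z,0)$, the factor $\alpha(z,0)$ staying fixed so that the telescoped series is literally $\varphi$. Putting $y=0$, $w=z$ gives $\rho(\phi(x,2z)-4\phi(x,z))\le\alpha(x,0)\alpha(z,z)$; iterating only the second argument yields $C(x,z)=\rho\text{-}\lim_n 4^{-n}\phi(x,2^nz)$ with error $\alpha(x,0)\psi(z,z)$. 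The normalizations $2^{-n}$ and $4^{-n}$ now match the additive, respectively quadratic, degree of the single variable being scaled; the perturbations $2^{-n}\alpha(2^nx,2^ny)\alpha(z,w)$ and $4^{-n}\alpha(x,y)\alpha(2^nz,2^nw)$ visibly tend to $0$ under \eqref{Eq-2}--\eqref{Eq-3}; and the $\min$ in the conclusion comes from a final step showing $A=C$. To repair your argument you must replace the single two-variable iteration by these two one-variable iterations.
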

	for all $x, y \in X$. 
	Let $\phi : X^2 \rightarrow Y_\rho $ be a mapping that satisfies $$\phi(x, 0)=\phi(0, z)=0,$$ 
	and
	\begin{equation}\label{Eq-4}
		\rho(\phi(x+y, z+w)+\phi(x-y, z-w) - 2 \phi(x, z)-2 \phi(x, w))  \leq  \alpha(x, y) \alpha(z, w)
	\end{equation}
	for all $x, y, z, w \in X$. 
	Then there exists a unique additive quadratic mapping $H: X^2 \rightarrow Y_\rho $ such that
	\begin{equation}\label{Eq-5}
		\rho(\phi(x, z)-H(x, z))  \leq  \min \{\varphi(x, x) \alpha(z, 0), \alpha(x, 0) \psi(z, z)\} \text{ for all }x , z \in X.
	\end{equation}

	\begin{proof}
		Letting $y=x$ and $w=0$ in \eqref{Eq-4}, we get
		$$
		\rho\left(\phi (2x, z)-2 \phi(x, z)\right)  \leq  \alpha(x, x) \alpha(z, 0)
		$$
		Then by convexity of $\rho$, we have
\begin{equation}\label{Eq-6}
			\rho\left(\frac{1}{2} \phi(2 x, z)-\phi(x, z)\right)  \leq  \frac{1}{2} \alpha(x, x) \alpha(z, 0)
\end{equation}
		and by induction, we have
		\begin{equation}\label{Eq-7}
			\rho\left(\frac{1}{2^k} \phi\left(2^k x, z\right)-\phi(x, z)\right)  \leq  \sum_{j=1}^k \frac{1}{2^j} \alpha\left(2^{j-1} x, 2^{j-1} x\right) \alpha(z, 0)
		\end{equation}
		for all $x , z \in X$ and all positive integer $k$. 
		For $k=1$, we obtain \eqref{Eq-6}. 
		Suppose that \eqref{Eq-7} holds for a fixed $k \in \mathbb{N}$. 
We have
		\begin{align*}
			\rho\left(\frac{1}{2^{k+1}} \phi\left(2^{k+1} x , z\right)-\phi(x , z)\right)&=\rho\left(\frac{1}{2}\left(\frac{1}{2^k} \phi\left(2^k\cdot 2 x, z\right)-\phi(2 x, z)\right) 
			+\frac{1}{2} \phi(2 x, z)-\phi(x , z)\right)\\
			&  \leq  \frac{1}{2} \sum_{j=1}^k \frac{1}{2^j} \alpha\left(2^j x , 2^j x\right) \alpha(z, 0)+\frac{1}{2} \alpha(x, x) \alpha(z, 0) \\
			& =\sum_{j=1}^{k+1} \frac{1}{2^j} \alpha\left(2^{j-1} x , 2^{j-1} x\right) \alpha(z, 0)
		\end{align*}
		Hence, \eqref{Eq-7} holds for every $k \in \mathbb{N}$.
		
		Let $m, n$ be positive integers such that $n>m$. We write
		\begin{align}
		\rho\left(\frac{1}{2^n} \phi\left(2^n x , z\right)-\frac{1}{2^m} \phi\left(2^m x, z\right)\right)
		&=\rho\left(\frac{1}{2^m}\left(\frac{\phi\left(2^{n-m} \cdot 2^m x, z\right)}{2^{n-m}}-\phi\left(2^m x, z \right)\right)\right)\nonumber\\
		&\leq  \frac{1}{2^m} \sum_{j=1}^{n-m} \frac{1}{2^j} \alpha\left(2^{m+j-1} x, 2^{m+j-1} x\right) \alpha(z, 0) \nonumber\\
		&=\sum_{k=m+1}^n \frac{1}{2^k} \alpha\left(2^{k-1} x, 2^{k-1} x\right) \alpha(z, 0) \label{Eq-8}
		\end{align}		
		it follows from \eqref{Eq-8} and \eqref{Eq-2} that $\left\{\frac{\phi\left(2^n x, z\right)}{2^n}\right\}$ is a Cauchy sequence in $Y_\rho $ which is complete, and this guarantees the existence a mapping $A: X^2 \rightarrow Y_\rho $ such that
		\begin{equation}
			A(x , z)=\rho-\operatorname{limit} \frac{\phi\left(2^n x,z\right)}{2^n},\qquad x, z \in X.
		\end{equation}
	Now, by application the Fatou property, we have
	\begin{align*}
		\rho(A(x, z)-\phi(x, z)) & \leq  \liminf_{n \rightarrow \infty}  \rho\left(\frac{1}{2^n} \phi(2^n x,z) - \phi(x, z)\right)\\
		&\leq  \sum_{k=1}^{\infty} \frac{1}{2^k} \alpha\left(2^{k-1} x, 2^{k-1} x\right) \alpha(z, 0)
	\end{align*}		
	Hence 
	\begin{equation}\label{Eq-9}
		\rho(A(x, z)-\phi(x, z))  \leq  \varphi(x, x) \alpha(z, 0)
	\end{equation}
	Now, we prove that $A$ is an additive-quadratic mapping. 
	In the first, we have
	\begin{multline*}
		\rho\left(\frac{1}{2^n} \phi\left(2^n(x+y) , z+w\right)+\frac{1}{2^n} \phi\left(2^n(x-y) , z-w\right)-\frac{2}{2^n} \phi\left(2^n x, z\right) - \frac{2}{2^n} \phi\left(2^n x, w\right)\right) \\
		 \leq  \frac{1}{2^n} \alpha\left(2^n x, 2^n y\right) \alpha(z, w) \longrightarrow 0 \text { as } n \rightarrow \infty
	\end{multline*}
	for all $x, y, z, w \in X$.
	And we have
	\begin{multline*}
			\rho\left(\frac{1}{12}(A(x+y, z+w)+A(x-y, z-w) - 2 A(x,z)-2 A(x, w))\right) \\
			 \leq  \frac{1}{6}\left[\frac{1}{2} \rho\left(A(x+y,z+w)-\frac{\phi\left(2^n(x+y) , z+w\right)}{2^n}\right)\right.\\
			 \left.+\frac{1}{2} \rho\left(A(x-y, z-w)-\frac{\phi\left(2^n(x-y) , z-w\right)}{2^n}\right)\right. \\
			\left.+\rho\left(A(x , z)-\frac{\phi\left(2^n x, z\right)}{2^n}\right)+\rho\left(A(x, w)-\frac{\phi\left(2^n x, w\right)}{2^n}\right)\right] \\
			+\frac{1}{12} \rho\left(\frac{\phi\left(2^n(x+y) , z+w \right)}{2^n}+\frac{\phi\left(2^n(x-y) , z-w\right)}{2^n}-\frac{2}{2^n} \phi\left(2^n x, z\right)-\frac{2}{2^n} \phi\left(2^n x,w\right)\right) \\
			\longrightarrow 0 \text { as } n \longrightarrow \infty.
	\end{multline*}
Then we get
$$
A(x+y, z+w)+A(x-y, z-w)-2 A(x ; z)-2 A(x, w)=0
$$
and by Lemma \ref{Lemma2.1}, we conclude that $A$ is an additive-quadratic mapping.
Now, letting $B: X^2 \rightarrow Y_\rho $ be another additive-quadratic mapping satisfying \eqref{Eq-9}. 
We have
\begin{align*}
		\rho\left(\frac{A(x, z)-B(x, z)}{2}\right)&=\rho\left(\frac{1}{2}\left(\frac{A\left(2^k x, z\right)}{2^k }-\frac{\phi\left(2^k x, z\right)}{2^k}\right) +\frac{1}{2}\left(\frac{\phi\left(2^k x , z\right)}{2^k}-\frac{B\left(2^k x, z\right)}{2^k}\right)\right) \\
		& \leq  \frac{1}{2} \rho\left(\frac{A\left(2^k x , z\right)}{2^k}-\frac{\phi(2^k x, z)}{2^k}\right)+\frac{1}{2} \rho\left(\frac{\phi(2^k x, z)}{2^k}-\frac{B(2^k x, z)}{2^k}\right) \\
		&  \leq  \frac{1}{2} \cdot \frac{1}{2^k} \left\{\rho\left(A\left(2^k x , z\right) - \phi\left(2^k x,  z\right)\right)+\rho\left(\phi\left(2^k x , z\right)-B\left(2^k x , z\right)\right)\right\} \\
		&  \leq  \frac{1}{2^k} \varphi\left(2^k x, 2^k x\right) \alpha(z, 0)\\
		& =\sum_{j=1}^{\infty} \frac{1}{2^{k+j}} \alpha\left(2^{k+j-1} x, 2^{k+j-1} x\right) \alpha(z, 0) \\
		& =\sum_{l=k+1}^{\infty} \frac{1}{2^l} \alpha\left(2^{l-1} x, 2^{l-1} x\right) \alpha(z, 0) \to 0 \text{ as } k\to\infty.
\end{align*}
Then $A(x , z)=B(x , z)$ for all $x , z \in X$.

On the other hand, letting $y=0$ and $w=z$ in \eqref{Eq-4}, we get
$$
\rho(\phi(x, 2 z)-4 \phi(x, z))  \leq  \alpha(z, z) \alpha(x, 0)
$$
Then
$$
\rho\left(\frac{1}{4} \phi(x, 2 z)-\phi(x, z)\right)  \leq  \frac{1}{4} \alpha(z , z) \alpha(x, 0)
$$
by simple induction, we have
$$
\rho\left(\frac{1}{4^k} \phi\left(x, 2^k z\right)-\phi(x, z)\right)  \leq  \sum_{j=1}^k \frac{1}{4^j} \alpha\left(2^{j-1} z, 2^{j-1} z\right) 
\alpha(x, 0).
$$

Let $m, n$ be positive integers with $n>m$. 
We have
\begin{align}
	\rho\left(\frac{1}{4^n} \phi\left(x, 2^n z\right)-\frac{1}{4^m} \phi\left(x, 2^m z\right) \right) 
	&=\rho\left(\frac{1}{4^m}\left(\frac{\phi\left(x, 2^{n-m} \cdot 2^m z\right)}{4^{n-m}}-\phi\left(x, 2^m z\right)\right)\right) \nonumber\\
	&  \leq  \frac{1}{4^m} \sum_{j=1}^{n-m} \frac{1}{4^j} \alpha\left(2^{m+j-1} z, 2^{m+j-1} z\right) \alpha(x, 0) \nonumber\\
	& =\sum_{k=m+1}^n \frac{1}{4^k} \alpha\left(2^{k-1} z, 2^{k-1} z\right) \alpha(x, 0) \label{Eq-2-10}
\end{align}
by \eqref{Eq-2-10} and \eqref{Eq-3}, we conclude that $\left\{\dfrac{\phi(x, 2z)}{4^n}\right\}$ is a Cauchy sequence in $Y_\rho $ wich is $\rho$-complete. 

Then there exists a mapping $C: X^2 \rightarrow Y_\rho $ such that
$$
C(x, z)=\rho \operatorname{limit} \frac{\phi\left(x, 2^n z\right)}{4^n}, \qquad x, z \in X .
$$
Moreover
\begin{align*}
	\rho(C(x, z)-\phi(x, z)) & \leq  \liminf_{n \rightarrow \infty} \rho\left(\frac{1}{4^n} \phi\left(x, 2^n z\right)-\phi(x, z)\right) \\
	&\leq  \sum_{k=1}^{\infty} \frac{1}{4^k} \alpha\left(2^{k-1} z, 2^{k-1} z\right) \alpha(x, 0)
\end{align*}
Hence 
\begin{equation}\label{Eq-11}
	\rho(C(x, z)-\phi(x, z))  \leq  \psi(z , z) \alpha(x, 0)
\end{equation}

Now,we prove that $C$ is an additive-quadratic mapping.

We have
\begin{multline*}
	\rho\left(\frac{1}{4^n} \phi\left(x+y, 2^n(z+w)\right)+\frac{1}{4^n} \phi\left(x-y, 2^n(z-w)\right)-\frac{2}{4^n} \phi\left(x, 2^n z\right)-\frac{2}{4^n} \phi\left(x, 2^n w\right)\right) \\
	\leq  \frac{1}{4^n} \alpha(x, y) \alpha \left(2^n z, 2^n w\right) \rightarrow 0 \text { as } n \rightarrow \infty 
\end{multline*}
for all $x, y, z, w \in X$. And we have
\begin{multline*}
	\rho\left(\frac{1}{12}(C(x+y, z+w)+C(x-y, z-w)-2 C(x, z)-2 C(x, w))\right) \\
	 \leq  \frac{1}{6}\left[  \frac{1}{2} \rho\left(C(x+y, z+w) -\frac{\phi\left(x+y, 2^n(z+w)\right)}{4^n}\right) \right.\\
	 \left.
	 + \frac{1}{2} \rho\left(C(x-y, z-w)-\frac{\phi\left(x-y, 2^n(z-w)\right)}{4^n}\right)\right. \\
	\left.+\rho\left(C(x, z)-\frac{\phi\left(x, 2^n z\right)}{4^n}\right)+\rho\left(C(x, w)-\frac{\phi\left(x, 2^n w\right)}{4^n}\right)\right] \\
	+\frac{1}{12} \rho\left(\frac{\phi\left(x+y, 2^n(z+w)\right)}{4^n}+\frac{\phi\left(x-y, 2^n(z-w)\right)}{4^n}-\frac{2}{4^n} \phi\left(x, 2^n z\right)-\frac{2}{4^n} \phi\left(x, 2^n w\right)\right)
\end{multline*}
$$
\longrightarrow 0 \text { as } n \rightarrow \infty
$$
Hence, we get
$$
C(x+y, z+w)+C(x-y, z-w)-2 C(x, z)-2 C(x, w)=0,
$$
by Lemma \ref{Lemma2.1}, we deduce that $C$ is an additive-quadratic mapping.

To shows the uniqueness of $C$, letting $D: X^2 \rightarrow Y_\rho $ be an other mapping satisfying \eqref{Eq-11}. 
We have
\begin{align*}
	\rho\left( \dfrac{C(x,z) - D(x,z)}{2} \right)& = \rho\left(
	\frac{1}{2} \left( \dfrac{C\left(x, 2^k z\right)}{4^k} -\dfrac{\phi\left(x, 2^k z\right)}{4^k} \right) 
	 +\dfrac{1}{2} \left(\phi\dfrac{\left(x, 2^k z\right)}{4^k} - \dfrac{D\left(x, 2^k z\right)}{4^k}\right) \right)\\
	&  \leq  \frac{1}{2} \cdot \frac{1}{4^k}\left\{\rho (C(x, 2^k z)-\phi(x, 2^k z)+\rho(\phi(x, 2^k z)-D(x, 2^k z))\right\} \\
	&  \leq  \frac{1}{4^k} \psi\left(2^k z, 2^k z\right) \alpha(x, 0) \\
	& =\sum_{j=1}^{\infty} \frac{1}{4^{k+j}} \alpha\left(2^{k+j-1} z, 2^{k+j-1} z\right) \alpha(x, 0) \\
	& =\sum_{l=k+1}^{\infty} \frac{1}{4^l} \alpha\left(2^{l-1} z, 2^{l-1} z\right) \alpha(x, 0) 
	\longrightarrow 0 \text { as } k \rightarrow \infty .
\end{align*}
Then $C(x, z)=D(x, z)$ for all $x , z \in X$. 

It follows from \eqref{Eq-9} and \eqref{Eq-11} that
\begin{align*}
	\rho\left(\frac{C(x, z)-A(x, z)}{2}\right)& =\rho\left(\frac{1}{2}\left(\frac{C\left(2^n x, z\right)}{2^n}-\frac{\phi\left(2^n x, z\right)}{2^n}\right)+\frac{1}{2}\left(\frac{\phi\left(2^n x, z\right)}{2^n}-\frac{A(2 x, z))}{2^n}\right)\right)\\
	&  \leq  \frac{1}{2} \cdot \frac{1}{2^n} \left\{\rho(C(2^n x , z) - \phi(2^n x,z))+\rho\left(\phi(2^n x, z)-A(2^n x,z )\right)\right\} \\
	&  \leq  \frac{1}{2} \cdot \frac{1}{2^n} \alpha\left(2^n x, 0\right) \psi(z,z)+\frac{1}{2} \cdot \frac{1}{2^n} \varphi\left(2^n, 2^n x\right) \alpha(z,0) 
	\longrightarrow 0 \text{ as }n \rightarrow \infty \\
	\text{ for all } x , z \in X. 
\end{align*}
This implies that $C(x, z)=A(x, z)=H(x, z)$ for all $x , z \in X$. 
Then there exists a unique additive-quadratic mapping $H: X^2 \rightarrow Y_\rho $ such that
$$
\rho(\phi(x, z)-H(x, z))  \leq  \min \{\varphi(x, x) \alpha(z, 0), \alpha(x, 0) \psi(z, z)\}
$$
for all $x , z \in X$.
\end{proof}

\begin{corollary}
	Let $X$ be a vector space and $Y_\rho $ be a $\rho$-complete convex modular space. 
	Let $0<r<1$ and $\theta$ be positives real numbers and $\phi : X^2 \longrightarrow Y_\rho $ be a mapping satisfying
	$$
	\phi(x, 0)=\phi(0, z)=0
	$$
	and
	\begin{equation}\label{Eq-12}
		\rho(\phi(x+y, z+w)+\phi(x-y, z-w)-2 \phi(x, z)-2 \phi(x, w))  \leq  \theta\left(\|x\|\left\|^r+\right\| y \|^r\right)\left(\|z\|^r+\|w\|^r\right)
	\end{equation}
	for all $x, y, z, w \in X$. 
	Then there exists a unique additive-quadratic mapping
	$H: X^2 \longrightarrow Y_\rho $ such that
	$$
	\rho(\phi(x , z)-H(x , z))  \leq  \frac{2\theta\|z\|^r \|x\|^r}{4-2^r}, \qquad x , z \in X .
	$$
\end{corollary}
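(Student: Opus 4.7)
The plan is to reduce this corollary to a direct application of Theorem~\ref{Theorem2.2}. I would choose the control function
\[
\alpha(x,y) := \sqrt{\theta}\,(\|x\|^r + \|y\|^r),
\]
which is symmetric and nonnegative, and for which
\[
\alpha(x,y)\,\alpha(z,w) = \theta\,(\|x\|^r + \|y\|^r)(\|z\|^r + \|w\|^r).
\]
With this choice the hypothesis \eqref{Eq-12} of the corollary becomes exactly the hypothesis \eqref{Eq-4} of Theorem~\ref{Theorem2.2}, and the normalisation $\phi(x,0)=\phi(0,z)=0$ is already assumed.

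Next I would verify that the auxiliary sums \eqref{Eq-2} and \eqref{Eq-3} are finite. Substituting the chosen $\alpha$ yields
\[
\varphi(x,y) = \sqrt{\theta}\,(\|x\|^r + \|y\|^r)\sum_{j=1}^{\infty} \frac{2^{(j-1)r}}{2^{j-1}}\cdot\tfrac{1}{2}\cdot 2, \qquad \psi(x,y) = 2\sqrt{\theta}\,(\|x\|^r + \|y\|^r)\sum_{j=1}^{\infty} \frac{2^{(j-1)r}}{4^{j}}\cdot\tfrac{1}{2},
\]
so both reduce to geometric series with ratios $2^{r-1}$ and $2^{r-2}$ respectively. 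Since $0<r<1$ both ratios lie in $(0,1)$ and the series converge; in particular a short computation gives
\[
\varphi(x,x) = \frac{2\sqrt{\theta}\|x\|^r}{2-2^r}, \qquad \psi(z,z) = \frac{2\sqrt{\theta}\|z\|^r}{4-2^r}.
\]

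Applying Theorem~\ref{Theorem2.2} then produces a unique additive-quadratic mapping $H:X^2\to Y_\rho$ satisfying
\[
\rho(\phi(x,z) - H(x,z)) \leq \min\bigl\{\varphi(x,x)\alpha(z,0),\ \alpha(x,0)\psi(z,z)\bigr\}.
\]
The minimum is bounded above by the second term, and since $\alpha(x,0)=\sqrt{\theta}\|x\|^r$, I obtain
\[
\rho(\phi(x,z) - H(x,z)) \leq \sqrt{\theta}\|x\|^r \cdot \frac{2\sqrt{\theta}\|z\|^r}{4-2^r} = \frac{2\theta\|x\|^r\|z\|^r}{4-2^r},
\]
which is precisely the claimed estimate. (In fact $4-2^r>2-2^r>0$, so $\alpha(x,0)\psi(z,z)$ really is the smaller of the two candidates.)

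The only conceivable obstacle is bookkeeping: one must verify that the geometric series sum correctly to give the denominator $4-2^r$, and that the factorised form of $\alpha$ reproduces the right-hand side of \eqref{Eq-12} on the nose. There is no conceptual difficulty beyond these routine algebraic checks, since every piece of machinery — the Fatou property, convexity of $\rho$, $\rho$-completeness, and the uniqueness argument — has already been packaged into Theorem~\ref{Theorem2.2}.
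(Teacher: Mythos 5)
Your proposal is correct and is essentially the paper's own proof: the paper likewise sets $\alpha(x,y)=\sqrt{\theta}(\|x\|^r+\|y\|^r)$, applies Theorem~\ref{Theorem2.2}, and observes that the minimum of the two resulting bounds is $\frac{2\theta\|x\|^r\|z\|^r}{4-2^r}$. Your explicit evaluation of the geometric series for $\varphi(x,x)$ and $\psi(z,z)$ is accurate (the intermediate displayed factorizations are slightly garbled, but the stated sums $\frac{2\sqrt{\theta}\|x\|^r}{2-2^r}$ and $\frac{2\sqrt{\theta}\|z\|^r}{4-2^r}$ are right), and in fact supplies more detail than the paper does.
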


\begin{proof}
	The proof is a result of Theorem \ref{Theorem2.2} by taking $\alpha(x, y)=\sqrt{\theta}(\|x\|^r + \|y\|^r )$ for all $x, y \in X$ and remarking that
	$$
	\min \left\{\frac{2 \theta}{2-2^r}\|x\|^r\|z\|^r ,\frac{2 \theta\|x\|^r\|z\|^r}{4-2^r}\right\}=\frac{2 \theta}{4-2^r}\|x\|^r\|z\|^r
	$$
	for all $x, z \in X$.
	Now, we obtain a classical Ulam stability, by putting $\alpha = \varepsilon>0$.
\end{proof}

\begin{corollary}
	Let $X$ be a vector space, $Y_\rho$ be a $\rho$-complete modular space with $\rho$ is a convex modular. 
	Let $\phi: X^2 \rightarrow Y_\rho $ be a mapping such that $\phi(x, 0)=\phi(0, z)=0$ and
	$$
		\rho(\phi(x+y, z+w)+\phi(x-y, z-w)-2 \phi(x, z)-2 \phi(x, w))  \leq  \varepsilon^2 
	$$
	for all $x, y, z, w \in X$. Then there exists a unique additive quadratic mapping $H: X^2 \longrightarrow Y_\rho$ such that $\rho(\phi(x, z)-H(x , z))  \leq  \frac{\varepsilon^2}{3}$, for all $x, z \in X$.
\end{corollary}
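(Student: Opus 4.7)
The plan is to obtain this corollary as an immediate specialization of Theorem~\ref{Theorem2.2}, taking the control function $\alpha:X^{2}\to[0,\infty)$ to be the constant $\alpha(x,y)=\varepsilon$. With this choice we have $\alpha(x,y)\alpha(z,w)=\varepsilon^{2}$, so the hypothesis of the corollary is precisely the instance of \eqref{Eq-4} required to invoke the theorem, and the boundary conditions $\phi(x,0)=\phi(0,z)=0$ are inherited verbatim. The first task is then to check that the convergence conditions \eqref{Eq-2} and \eqref{Eq-3} hold for this $\alpha$, which reduces to summing two geometric series.

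Specifically, with $\alpha\equiv\varepsilon$ one finds
\[
\varphi(x,y)=\sum_{j=1}^{\infty}\frac{\varepsilon}{2^{j}}=\varepsilon,\qquad \psi(x,y)=\sum_{j=1}^{\infty}\frac{\varepsilon}{4^{j}}=\frac{\varepsilon}{3},
\]
both finite and independent of $(x,y)$, so every hypothesis of Theorem~\ref{Theorem2.2} is met. The theorem then produces a unique additive-quadratic mapping $H:X^{2}\to Y_\rho$ satisfying
\[
\rho(\phi(x,z)-H(x,z))\leq\min\{\varphi(x,x)\alpha(z,0),\ \alpha(x,0)\psi(z,z)\}=\min\bigl\{\varepsilon^{2},\tfrac{\varepsilon^{2}}{3}\bigr\}=\frac{\varepsilon^{2}}{3},
\]
which is the claimed bound.

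The only subtlety, and the single point I would flag as a potential obstacle, is that Theorem~\ref{Theorem2.2} is stated under the Fatou property on $\rho$, whereas the corollary's hypotheses only list $\rho$-completeness and convexity of $\rho$. Since the corollary sits in the section devoted to the Fatou-property setting, I would treat that assumption as implicit and apply the theorem directly; alternatively, one could re-run the proof of Theorem~\ref{Theorem2.2} with $\alpha\equiv\varepsilon$, where the telescoping estimate \eqref{Eq-8} is already uniformly bounded by $\varepsilon^{2}$ in $n$, so the limiting step that invoked Fatou can be replaced by an elementary triangle-inequality argument without changing the constant. Beyond this bookkeeping, no new calculation is needed: the corollary is a direct plug-in of the main theorem.
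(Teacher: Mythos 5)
Your proof is correct and is exactly the argument the paper intends: the paper gives no separate proof of this corollary beyond the remark at the end of the previous one ("we obtain a classical Ulam stability, by putting $\alpha=\varepsilon$"), and your computation $\varphi\equiv\varepsilon$, $\psi\equiv\varepsilon/3$, $\min\{\varepsilon^2,\varepsilon^2/3\}=\varepsilon^2/3$ is the intended plug-in of Theorem~\ref{Theorem2.2}. Your observation that the corollary's statement omits the Fatou property required by the theorem is a fair catch of a sloppiness in the paper, and your workaround is sound.
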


\section{Stability of \eqref{Eq-1} modular space satisfying $\Delta_2$-condition}
\begin{theorem}\label{thm31}
	Let $X$ be a vector space, $Y_\rho $ be a $\rho$-complete convex modular space, and satisfying the $\Delta_2$-condition. 
	Let $\alpha: X^2 \longrightarrow[0, \infty)$ be a function such that
	\begin{enumerate}
		\item[(i)]
		\begin{equation}\label{Eq-13}
			\varphi(x, y)=\sum_{j=1}^{\infty}\left(\frac{\tau^2}{2}\right)^j \alpha\left(\frac{x}{2^j}, \frac{y}{2^j}\right)<\infty\text{ and }\lim _{n \rightarrow \infty} \tau^n \alpha\left(\frac{x}{2^n}, \frac{y}{2^n}\right)=0
		\end{equation}
		\item[(ii)]
		\begin{equation}\label{Eq-14}
			\psi(x, y)=\sum_{j=1}^{\infty}\left(\frac{\tau^3}{2}\right)^j \alpha\left(\frac{x}{2^j}, \frac{y}{2^j}\right)<\infty \text{ and } \lim _{n \rightarrow \infty} \tau^{2 n} \alpha\left(\frac{x}{2^n}, \frac{y}{2^n}\right)=0. 
		\end{equation}
	\end{enumerate}
	Let $\phi: X^2 \longrightarrow Y_\rho $ be a mapping such that $\phi(x, 0)=\phi(0, z)=0$ and 
	\begin{equation}\label{Eq-15}
		\rho(\phi(x+y , z+w)+\phi(x-y, z-w)-2 \phi(x, z)-2 \phi(x, w))  \leq  \alpha(x, y) \alpha(z, w)
	\end{equation} 
	for all $x, y, z, w \in X$. 
	Then there exists a unique additive-quadratic mapping $H: X^2 \longrightarrow Y_\rho $ such that
	$$
	\rho(\phi(x, z)-H(x, z))  \leq  \min \left\{\frac{1}{2} \varphi(x, x) \alpha(z, 0), \frac{1}{2\tau} \psi(z, z) \alpha(x, 0)\right\}
	$$
	for all $x,z \in X$.
\end{theorem}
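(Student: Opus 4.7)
The approach mirrors Theorem~\ref{Theorem2.2}, with the $\Delta_2$-condition replacing the Fatou property. The candidate limits are built by iterating \emph{toward} the origin: for the additive part one takes $A_n(x,z) := 2^n\phi(x/2^n, z)$, and for the quadratic part $C_n(x,z) := 4^n\phi(x, z/2^n)$. The $\Delta_2$-inequality $\rho(2u)\le\tau\rho(u)$ trades multiplications by $2$ inside the modular for factors of $\tau$ outside; it will be used both to control successive differences of the iterates and to transfer estimates from the iterates to their $\rho$-limits, playing the role that Fatou's property played in Section~2.

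For the additive part, put $y=x$, $w=0$ in \eqref{Eq-15} and replace $x$ by $x/2^{k+1}$ to obtain $\rho(\phi(x/2^k,z) - 2\phi(x/2^{k+1},z)) \le \alpha(x/2^{k+1}, x/2^{k+1})\alpha(z,0)$; then $\rho(2^k u)\le \tau^k\rho(u)$ yields
\[
\rho(A_{k+1}(x,z) - A_k(x,z)) \le \tau^k \alpha(x/2^{k+1}, x/2^{k+1})\alpha(z,0).
\]
Writing the telescoping sum $A_n - A_m = \sum_{k=m}^{n-1}2^{-(k+1)}\cdot 2^{k+1}(A_{k+1}-A_k)$, applying the convex--modular inequality $\rho(\sum\lambda_k u_k)\le\sum\lambda_k\rho(u_k)$ (valid since $\sum_{k=m}^{n-1}2^{-(k+1)} < 1$), and then $\Delta_2$, one bounds $\rho(A_n-A_m)$ by the tail $\sum_{j=m+1}^{n}\tau^{-1}(\tau^2/2)^j\alpha(x/2^j,x/2^j)\alpha(z,0)$ of the series defining $\varphi(x,x)$. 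This tail vanishes by hypothesis~(i), so $\{A_n(x,z)\}$ is $\rho$-Cauchy and, by $\rho$-completeness, converges to some $A(x,z)\in Y_\rho$. To pass the bound to the limit, write $A-\phi = (A-A_n)+(A_n-\phi)$ and use $\rho(a+b)\le (\tau/2)(\rho(a)+\rho(b))$ (a consequence of convexity combined with $\Delta_2$); letting $n\to\infty$ gives $\rho(A(x,z)-\phi(x,z))\le\tfrac12\varphi(x,x)\alpha(z,0)$, the constant $\tfrac12$ arising as $\tfrac\tau2\cdot\tfrac1\tau$.

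That $A$ satisfies the target functional equation is proved by a four-term convex estimate together with
\[
\rho\bigl(A_n(x+y,z+w)+A_n(x-y,z-w)-2A_n(x,z)-2A_n(x,w)\bigr) \le \tau^n \alpha(x/2^n, y/2^n)\alpha(z,w),
\]
which tends to $0$ by the second assertion in~(i); Lemma~\ref{Lemma2.1} then confirms $A$ is additive-quadratic. Uniqueness of $A$ exploits its additivity in the first variable: for another such $B$, $A(x,z)-B(x,z)=2^k(A(x/2^k,z)-B(x/2^k,z))$, so after applying $\Delta_2$ $k$ times and combining the bounds on $A-\phi$ and $\phi-B$, the residual is controlled by $\tau^k \varphi(x/2^k, x/2^k) = (2/\tau)^k\sum_{l>k}(\tau^2/2)^l\alpha(x/2^l,x/2^l)$, which $\to0$. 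The quadratic iterate $C_n(x,z) := 4^n\phi(x,z/2^n)$ is handled identically, with each $\Delta_2$-step now costing a factor $\tau^2$ rather than $\tau$; this is precisely why $\psi$ features $(\tau^3/2)^j$ and why the resulting constant becomes $\tfrac{1}{2\tau}$. Finally, $A=C=:H$ is forced by the same $2^n$-rescaling argument applied to $A-C$, giving the stated $\min$ bound.

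The main obstacle is the bookkeeping in the Cauchy / passage-to-limit step: the convex weights $2^{-(k+1)}$, the $\Delta_2$-cost $\tau^{k+1}$, and the $\tau^k$ already present in $\rho(A_{k+1}-A_k)$ must combine to produce exactly the $(\tau^2/2)^j$ pattern defining $\varphi$, and then the further factor $\tfrac\tau2$ from the $(a+b)$-splitting must cancel against a $\tfrac1\tau$ in the tail to yield the clean constant $\tfrac12$ (respectively $\tfrac1{2\tau}$ for the quadratic part). Once this is laid out, the rest essentially duplicates the mechanics of Theorem~\ref{Theorem2.2}.
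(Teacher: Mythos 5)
Your proposal is correct and follows essentially the same route as the paper: the same candidate limits $2^n\phi(x/2^n,z)$ and $4^n\phi(x,z/2^n)$, the same telescoping sum with convex weights $2^{-j}$ combined with the $\Delta_2$-cost $\tau^{j}$ to produce the $(\tau^2/2)^j$ (resp.\ $(\tau^3/2)^j$) tails, and the same $\rho(a+b)\le\tfrac{\tau}{2}(\rho(a)+\rho(b))$ splitting to transfer the bound to the limit, yielding the constants $\tfrac12$ and $\tfrac1{2\tau}$ exactly as in the paper. The only deviations are cosmetic (indexing of the telescoping sum and a slightly looser, but still vanishing, constant in the uniqueness estimate).
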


\begin{proof}
	Letting $y=x$ and $w=0$ in \eqref{Eq-15}, we get
	$$
	\rho(\phi(2 x, z)-2 \phi(x, z))  \leq  \alpha(x, x) \alpha(z, 0)
	$$
	Hence
	$$ \rho\left(\phi(x , z)-2 \phi\left(\frac{x}{2^n}, z\right)\right)  \leq  \alpha\left(\frac{x}{2}, \frac{x}{2}\right) \alpha(z, 0)$$
	Then, by using the $\Delta_2$-condition and the convexity of $\rho$, we have
	\begin{align*}
		\rho\left(\phi(x, z)-2^n \phi\left(\frac{x}{2^n}, z\right)\right)&=\rho\left(\sum_{j=1}^n \frac{1}{2^j}\left(2^{2 j-1} \phi\left(\frac{x}{2^{j-1}}, z\right)-2^{2 j} \phi\left(\frac{x}{2^j}, z\right)\right)\right) \\
		 &\leq  \frac{1}{\tau} \sum_{j=1}^n\left(\frac{\tau^2}{2}\right)^j \alpha\left(\frac{x}{2^j}, \frac{x}{2^j}\right) \alpha(z, 0)
	\end{align*}
	for all $x, z \in X$. 
	So, for all positifs integers $m$ and $n$ with $n>m$, we have
	\begin{align}
		\rho\left(2^n \phi\left(\frac{x}{2^n}, z\right)-2^m \phi\left(\frac{x}{2^m}, z\right)\right)  &\leq  \tau^m \rho\left(2^{n-m} \phi\left(\frac{x}{2^n}, z\right)-\phi\left(\frac{x}{2^m}, z\right)\right)\nonumber\\
		& \leq  \tau^{m-1} \sum_{j=1}^{n-m}\left(\frac{\tau^2}{2}\right)^j \alpha\left(\frac{x}{2^{m+j}}, \frac{x}{2^{m+j}}\right) \alpha(z, 0)\nonumber\\
		&
		= \frac{1}{\tau} \left(\frac{2}{\tau}\right)^m \sum_{l=m+1}^n\left(\frac{\tau^2}{2}\right)^l \alpha\left(\frac{x}{2^l}, \frac{x}{2^l}\right) \alpha(z, 0) \label{Eq-16}
	\end{align}
	it follows from \eqref{Eq-16} and \eqref{Eq-13} that $\left\{2^n \phi\left(\frac{x}{2^n}, z\right)\right\}$ is a Cauchy sequence in $Y_\rho$ wich is complete. 
	Hence, we define a mapping $A: X^2 \rightarrow Y_\rho$ as
	$$
	A(x , z)=\rho-\lim _{n \rightarrow \infty} 2^n \phi\left(\frac{x}{2^n}, z\right) ; x , z \in X.
	$$
	Now, we have
	\begin{align*}
		\rho(\phi(x , z)-A(x,z)) & \leq  
		\frac{1}{2} \rho
		\left(2 \phi(x , z)-2^{n+1} \phi\left(\frac{x}{2^n} , z\right)\right) +
		\frac{1}{2} \rho
		\left(2^{n+1} \phi\left(\frac{x}{2^n} , z\right)-2 A(x, z)\right) \\
		&  \leq  \frac{\tau}{2} \rho\left(\phi(x , z)-2^n \phi\left(\frac{x}{2^n} , z\right)\right)+\frac{\tau}{2} \rho\left(2^n \phi\left(\frac{x}{2^n}, z\right)-A(x , z)\right)\\
	 &\leq  \frac{1}{2} \sum_{j=1}^n\left(\frac{\tau^2}{2}\right)^j \alpha\left(\frac{x}{2^j}, \frac{x}{2^j}\right) \alpha(z, 0)+\frac{\tau}{2} \rho\left(2^n \phi\left(\frac{x}{2^n}, z\right)-A(x, z)\right)
	\end{align*}
	for all $x, z \in X$. 
	Passing to limit $n \rightarrow \infty$, we obtain
	\begin{equation}\label{Eq-17}
		\rho(\phi(x, z)-A(x, z))  \leq  \frac{1}{2} \varphi(x, x) \alpha(z, 0)
	\end{equation}
	Now, we prove that $A$ is an additive-quadratic mapping.

	$$
	\begin{aligned}
		& \text { In the first, we have: } \\
		& \rho\left(2^n \phi\left(\frac{x+y}{2^n}, z+w\right)+2^n \phi\left(\frac{x-y}{2^n}, z-w\right)-2 \cdot 2^n \phi\left(\frac{x}{2^n}, z\right)-2\cdot 2^n \phi\left(\frac{x}{2^n}, w\right)\right) \\
		&  \leq  \tau^n \alpha\left(\frac{x}{2^n}, \frac{y}{2^n}\right) \alpha(z, w) \longrightarrow \text { o as } n \rightarrow \infty .
	\end{aligned}
	$$
	for all $x, y, z, w \in X$.
	And we have
	\begin{multline*}
			\rho(A(x+y, z+w)+A(x-y, z-w)-2 A(x, z)-2 A(x, w))\\
			\leq   \frac{1}{8} \rho\left(8\left(A(x+y, z+w)-2^n \phi\left(\frac{x+y}{2^n}, z+w\right)\right)\right)+ \\ \frac{1}{8} \rho\left(8\left(A(x-y , z-w)-2^n \phi\left(\frac{x-y}{2^n}, z-w\right)\right)\right)
	\end{multline*}
	\begin{multline*}
		 +\frac{1}{8} \rho\left(16\left(A(x, z)-2^n \phi\left(\frac{x}{2^n}, z\right)\right)\right)+\frac{1}{8} \rho\left(16\left(A(x, w)-2^n \phi\left(\frac{x}{2^n}, w\right)\right)\right) \\
		 +\frac{1}{8} \rho\left(8\left(2^n \phi\left(\frac{x+y}{2^n}, z+w\right)+2^n \phi\left(\frac{x-y}{2^n}, z-w\right)
		 -2 \cdot 2^n \phi\left(\frac{x}{2^n}, z\right)
		 -2 \cdot 2^n \phi\left(\frac{x}{2^n}, w\right)\right)\right)
	\end{multline*}

	\begin{multline*}
		\leq  \frac{\tau^3}{8} \rho\left(A(x+y, z+w)-2^n \phi\left(\frac{x+y}{2^n}, z+w\right)\right)+\\
		\frac{\tau^3}{8} \rho\left(A(x-y, z-w)-2^n \phi\left(\frac{x-y}{2^n}, z-w\right)\right) \\
		+\frac{\tau^4}{8} \rho\left(A(x , z)-2^n \phi\left(\frac{x}{2^n}, z\right)\right)+\frac{\tau^4}{8} \rho\left(A(x, w)-2^n \phi\left(\frac{x}{2^n}, w\right)\right) \\
		\quad+\frac{\tau^3}{8} \rho\left(2^n \phi\left(\frac{x+y}{2^n}, z+w\right)+2^n \phi\left(\frac{x-y}{2^n}, z-w\right) \right.\\
		\left.-2 \cdot 2^n \phi\left(\frac{x}{2^n}, z\right)-2 \cdot 2^n \phi\left(\frac{x}{2^n}, w\right)\right) 
		\quad \longrightarrow 0 \text { as } n \rightarrow \infty
	\end{multline*}
	for all $x, y, z, w \in X$.
	Then we conclude that
	$$
	A(x+y, z+w)+A(x-y, z-w)-2 A(x, z)-2 A(x, w)=0
	$$
	and by Lemma \ref{Lemma2.1}, we deduce that $A$ is an additive-quadratique mapping.
	Now, let $B: X^2 \rightarrow Y_\rho $ another additive-quadratic
	mapping satisfying \eqref{Eq-17}, we have
	$$
	\begin{aligned}
		\rho(A(x, z)-B(x, z))  \leq  & \frac{1}{2} \rho\left(2^{n+1} A\left(\frac{x}{2^n}, z\right)-2^{n+1} \phi\left(\frac{x}{2^n}, z\right)\right) \\
		& +\frac{1}{2} \rho\left(2^{n+1} \phi\left(\frac{x}{2^n}, z\right)-2^{n+1} B\left(\frac{x}{2^n}, z\right)\right) \\
		  \leq&  \frac{\tau^{n+1}}{2} \rho\left(A\left(\frac{x}{2^n}, z\right)-\phi\left(\frac{x}{2^n}, z\right)\right)\\
		 & +
		  \frac{\tau^{n+1}}{2} \rho\left(\phi\left(\frac{x}{2^n}, z\right)-B\left(\frac{x}{2^n}, z\right)\right) \\
		 \leq  & \frac{\tau^{n+1}}{2} \varphi\left(\frac{x}{2^n}, \frac{x}{2^n}\right) \alpha(z, 0) \\
		= & \left(\frac{2}{\tau}\right)^{n-1} \sum_{l=n+1}^{\infty}\left(\frac{\tau^2}{2}\right)^l \alpha\left(\frac{x}{2^l}, \frac{x}{2^l}\right) \alpha(z, 0) \longrightarrow 0 \text { as } n \rightarrow \infty .
	\end{aligned}
	$$
	for all  $x\in X$, and all positive integer $n$. 
	Then, we have
	$$
	A(x, z)=B(x, z) \text { for all } x, z \in X.
	$$
	On the other hand, letting $y=0$ and $w=z$ in \eqref{Eq-15}, we get
	$$
	\rho(\phi(x, 2 z)-4 \phi(x, z))  \leq  \alpha(z, z) \alpha(x, 0).
	$$
	Hence 
	$$\rho\left(\phi(x, z) - 4 \phi\left(x , \frac{z}{2}\right)\right)  \leq  \alpha \left(\frac{z}{2}, \frac{z}{2}\right) \alpha(x, 0)$$
	Then, by using the $\Delta_2$-condition and the convexity of $\rho$, we have (remarking that $\sum_{j=1}^k \dfrac{1}{2^k}  \leq  1$)
	\begin{align*}
			\rho\left(\phi(x, z)-4^n \phi\left(x, \frac{z}{2^n}\right)\right) & =\rho\left(\sum_{j=1}^n \frac{1}{2^j}\left(2^{3 j-2} \phi\left(x, \frac{z}{2^j}\right)-2^{3 j} \phi\left(x, \frac{z}{2^j}\right)\right)\right) \\
			&  \leq  \frac{1}{\tau^2} \sum_{j=1}^n\left(\frac{\tau^3}{2}\right)^j \alpha\left(\frac{z}{2^j}, \frac{z}{2^j}\right) \alpha(x, 0)
		\end{align*}
	for all $x, z \in X$.
	Now, we have
	\begin{align*}
		\rho\left(4^m \phi\left(x, \frac{z}{2^m}\right)-4^{n+m} \phi\left(x, \frac{z}{2^{n+m}}\right)\right)  & \leq  \tau^{2 m} \rho\left(\phi\left(x, \frac{z}{2^m}\right)-4^n \phi\left(x, \frac{z}{2^{n+m}}\right)\right) \\
		&  \leq  \tau^{2 m-2} \sum_{j=1}^n\left(\frac{\tau^3}{2}\right)^j \alpha \left(\frac{z}{2^{m+j}} , \frac{z}{2^{m+j}}\right) \alpha(x,0) \\
		&  \leq  \frac{2^m}{\tau^{m+2}} \sum_{l=m+1}^{n+m}\left(\frac{\tau^3}{2}\right)^l \alpha\left(\frac{z}{2^l}, \frac{z}{2^l}\right) \alpha(x, 0)\\
		& \rightarrow \text { as } m \rightarrow \infty (\text{ because } \dfrac{2}{\tau} \leq 1)
	\end{align*}
	for all $x, z \in X$. 
	Hence, the sequence $\left\{4^n \phi\left(x, \frac{z}{2^n}\right)\right\}$ is a $\rho$-Cauchy sequence in $Y_\rho $ wish is $\rho$ complete. 
	Then we have a mapping  $C:X^2 \rightarrow Y_\rho $ such that $C(x, z)=\rho-\lim _{n \rightarrow \infty} 4^n \phi\left(x, \frac{z}{2^n}\right) ; x \in X$.
	By using the $\Delta_2$-condition, we write
	\begin{align*}
		\rho\left(\phi(x, z)-C\left(x, z\right)\right)  &\leq  \frac{1}{2} \rho\left(2 \phi(x, z)-2\cdot4^n \phi\left(x, \frac{z}{2^n}\right)\right) 
		+\frac{1}{2} \rho\left(2 \cdot 4^n \phi\left(x, \frac{z}{2^ n}\right)
		-2 C(x, z)\right) \\
		&  \leq  \frac{\tau}{2} \rho\left(\phi(x, z)-4^n \phi\left(x, \frac{z}{2^n}\right)\right)+\frac{\tau}{2} \rho\left(4^n \phi\left(x, \frac{z}{2^n}\right)-C(x, z)\right) \\
		&  \leq  \frac{1}{2 \tau} \sum_{j=1}^n\left(\frac{\tau^3}{2}\right)^j \alpha\left(\frac{z}{2^j}, \frac{z}{2^j}\right) \alpha(x, 0)+\frac{\tau}{2} \rho\left(4^n \phi\left(x, \frac{z}{2^n}\right)-C(x, z)\right)
	\end{align*}
	for all $x, z \in X$. 
	Passing to limit $n \rightarrow \infty$, we obtain.
\begin{equation}\label{Eq-18}
		\rho(\phi(x, z)-C(x, z))  \leq  \frac{1}{2 \tau} \psi(z, z) \alpha(x, 0) .
\end{equation}
	For proving that $C$ is an additive-quadratic mapping, we will start with
	\begin{multline*}
		\rho\left(4^n \phi\left(x+y, \frac{z+w}{2^n}\right)+4^n \phi\left(x-y, \frac{z-w}{2^n}\right)-2 \cdot 4^n \phi\left(x, \frac{z}{2^n}\right)-2 \cdot 4^n \phi\left(x, \frac{w}{2^n}\right)\right) \\ \leq \tau^{2 n} \alpha(x, y) \alpha\left(\frac{z}{2^n}, \frac{w}{2^n}\right) 
		\longrightarrow \infty \operatorname{as} n \rightarrow \infty. 
	\end{multline*}
	for all $x, y, z, w \in X$.
	And we have:
	\begin{multline*}
	\rho\left(\frac{1}{7} C(x+y, z+w)+\frac{1}{7} C(x-y, z-w)-\frac{2}{7} C(x, z)-\frac{2}{7} C(x, w)\right)\\
	\leq   \frac{1}{7} \rho\left(C(x+y, z+w)-4^n \phi\left(x+y, \frac{z+w}{2^n}\right)\right)\\
	+\frac{1}{7} \rho\left(C(x-y, z-w)-4^n \phi\left(x-y, \frac{z-w}{2^n}\right)\right) \\
	+\frac{2}{7} \rho\left(C(x, z)-4^n \phi\left(x, \frac{z}{2^n}\right)\right)+\frac{2}{7} \rho\left(C(x, w)-4^n \phi\left(x, \frac{w}{2^n}\right)\right) \\
	+\frac{1}{7} \rho\left(4^n \phi\left(x+y, \frac{z+w}{2^n}\right)+4^n \phi\left(x-y, \frac{z-w}{2^n}\right)-2\cdot4^n \phi\left(x, \frac{z}{2^n}\right)-2\cdot4^n \phi\left(x, \frac{w}{2^n}\right)\right), \\
	\longrightarrow 0 \text { as } n \rightarrow \infty
	\end{multline*}
	for all $x, y, z, w \in X$. Hence, we have
	$$
	C(x+y, z+w)+C(x-y, z-w)-2 C(x, z)-2 C(x, w)=0
	$$
	and by Lemma \ref{Lemma2.1}, we deduce that $C$ is an additive-quadratic mapping.
	To shows that $C$ is unique, letting $D$ be another mapping satisfying \eqref{Eq-18}. We have
	\begin{multline*}
		\rho(C(x, z)-D(x, z))  \leq  \frac{1}{2} \rho\left(2 \cdot 4^n C\left(x, \frac{z}{2^n}\right)-2 \cdot 4^n \phi\left(x, \frac{z}{2^n}\right)\right) \\
		+\frac{1}{2} \rho\left(2\cdot 4^n \phi\left(x, \frac{z}{2^n}\right)-2 \cdot 4^n D\left(x, \frac{z}{2^n}\right)\right) \\
		\leq  \frac{\tau^{2 n+1}}{2} \rho\left(C\left(x, \frac{z}{2^ n}\right)-\phi\left(x, \frac{z}{2^n}\right)\right)+\frac{\tau^{2 n+1}}{2} \rho\left(\phi\left(x, \frac{z}{2}\right)-D\left(x, \frac{z}{2^ n}\right)\right) \\
		\leq  \frac{\tau^{2 n}}{2} \psi\left(\frac{z}{2^n}, \frac{z}{2^n}\right) \alpha(x, 0) \\=\frac{2^{n-1}}{\tau^n} \sum_{l=n+1}^{\infty}\left(\frac{\tau^3}{2}\right)^l \alpha\left(\frac{z}{2^l}, \frac{z}{2^l}\right) \alpha(x, 0) \\
		\longrightarrow 0 \text { as } n \rightarrow \infty.
	\end{multline*}
	This implies that: $C(x, z)=D(x, z)$ for all $x, z \in X$. 
	It follows from \eqref{Eq-18} that
	\begin{align*}
		\rho\left(\frac{2^n \phi\left(\dfrac{x}{2^n}, z\right)-2^n C\left(\dfrac{x}{2^n}, z\right)}{2}\right) & \leq  \frac{\tau^n}{2} \rho\left(\phi\left(\frac{x}{2^n}, z\right)-C\left(\frac{x}{2^n}, z\right)\right) \\
		 &\leq  \frac{\tau^n}{4 \tau} \alpha\left(\frac{x}{2^n}, 0\right) \psi(z,z)
		 \longrightarrow 0\text{ as }n \rightarrow \infty.
	\end{align*}
	Since $C: X^2 \rightarrow Y_\rho $ is additive in the first variable, we get 
	$$\rho\left(\frac{1}{2} A(x ,z)-\frac{1}{2} C(x, z)\right)  \leq  0,$$ 
	thus we conclude that
	$$A(x, z)=C(x, z)=H(x, z)\text{ for all }x, z \in X.$$ 
	Hence, there exists a unique additive-quadratique mapping $$H: X^2 \to Y_\rho $$ such that
	$$
	\rho(\phi(x, z)-H(x, z))  \leq  \min \left\{\frac{1}{2 \tau} \psi(z, z) \alpha(x, 0), \frac{1}{2} \varphi(x, x) \alpha(z, 0)\right\}
	$$
	for all $x, z \in X$, and hat complete the proof.
	
\end{proof}

\begin{corollary}
	Let $X$ be a vector space, $Y_\rho$ be a $\rho$-complete convex modular sapace satisfying $\Delta_2$-condition. 
	Let $r>\log _2\left(\dfrac{\tau^2}{2}\right)$ and $\theta$ be positive real numbers, and $\phi: X^2 \rightarrow Y_\rho $ be mapping satisfying $\phi(x, 0)=\phi(0, z)=0$ and 
	$$
	\rho(\phi(x+y, z+w)+\phi(x-y, z-w)-2 \phi(x, z)-2 \phi(x, w)) \leq 
	\theta (\|x\|^r + \|y\|^r)   (\|z\|^r + \|w\|^r)
	$$
	for all $x, y, z, w \in X$. 
	Then there exists a unique additive-quadratic mapping $H: X^2 \rightarrow Y_\rho $ such that:
	\begin{equation}
		\rho(\phi(x, z)-H(x, z))  \leq  \frac{\theta \tau^2 \|x\|^r \|z\|^r }{2^{r+1}-\tau^2} ; \qquad x, z \in X .
	\end{equation}
\end{corollary}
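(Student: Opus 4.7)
The plan is to obtain the corollary as a direct specialization of Theorem~\ref{thm31}, by choosing the control function $\alpha(x,y) := \sqrt{\theta}(\|x\|^r + \|y\|^r)$. With this choice one has $\alpha(x,y)\alpha(z,w) = \theta(\|x\|^r + \|y\|^r)(\|z\|^r + \|w\|^r)$, so the hypothesis of the corollary matches \eqref{Eq-15} exactly, and it only remains to verify the summability/decay conditions \eqref{Eq-13}–\eqref{Eq-14} and to substitute the explicit values of $\varphi$ and $\psi$ into the conclusion of the theorem. The strategy mirrors that of the corresponding corollary proved under the Fatou property in Section~2.

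First I would use the homogeneity $\alpha(x/2^j, y/2^j) = 2^{-jr}\alpha(x,y)$ to reduce the series defining $\varphi$ and $\psi$ to geometric series with common ratios $\tau^2/2^{r+1}$ and $\tau^3/2^{r+1}$ respectively. Convergence of $\varphi$ then amounts to $\tau^2 < 2^{r+1}$, which is exactly the assumption $r > \log_2(\tau^2/2)$. The decay statements $\tau^n\alpha(x/2^n,y/2^n)\to 0$ and $\tau^{2n}\alpha(x/2^n,y/2^n)\to 0$ reduce to $(\tau/2^r)^n\to 0$ and $(\tau^2/2^r)^n\to 0$, both automatic from the series convergence once one also checks $\psi$.

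Next I would carry out the explicit computation. Summing the geometric series gives
\[
\varphi(x,x) \;=\; \frac{2\sqrt{\theta}\,\tau^2}{2^{r+1} - \tau^2}\,\|x\|^r, \qquad \alpha(z,0) \;=\; \sqrt{\theta}\,\|z\|^r,
\]
so the first half of the bound in Theorem~\ref{thm31} becomes $\tfrac{1}{2}\varphi(x,x)\alpha(z,0) = \dfrac{\theta\tau^2\|x\|^r\|z\|^r}{2^{r+1} - \tau^2}$. The parallel computation gives $\tfrac{1}{2\tau}\psi(z,z)\alpha(x,0) = \dfrac{\theta\tau^2\|x\|^r\|z\|^r}{2^{r+1} - \tau^3}$. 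Since $\tau\geq 2$ (as noted in the remark following Definition~\ref{Definition1.1}), we have $2^{r+1}-\tau^3 \leq 2^{r+1}-\tau^2$, so the first expression is the smaller one and is therefore the minimum appearing in the statement.

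The main obstacle is essentially bookkeeping, but there is one genuinely delicate point: the stated hypothesis $r > \log_2(\tau^2/2)$ only secures convergence of $\varphi$, whereas $\psi$ requires the stronger bound $r > \log_2(\tau^3/2)$. To invoke Theorem~\ref{thm31} verbatim one must verify both, so a clean presentation would either strengthen the hypothesis on $r$ accordingly, or observe that the $\varphi$-construction alone produces a unique additive-quadratic mapping with the claimed bound — in which case the construction can be reproduced directly under the weaker assumption without appealing to the $\psi$-branch.
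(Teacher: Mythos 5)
Your proof takes exactly the same route as the paper: specialize Theorem~\ref{thm31} to $\alpha(x,y)=\sqrt{\theta}\,(\|x\|^r+\|y\|^r)$, sum the resulting geometric series to get $\tfrac12\varphi(x,x)\alpha(z,0)=\theta\tau^2\|x\|^r\|z\|^r/(2^{r+1}-\tau^2)$ and $\tfrac{1}{2\tau}\psi(z,z)\alpha(x,0)=\theta\tau^2\|x\|^r\|z\|^r/(2^{r+1}-\tau^3)$, and note that since $\tau\geq 2$ the first expression is the minimum. Your caveat about the $\psi$-branch is well taken: the stated hypothesis $r>\log_2(\tau^2/2)$ does not ensure $\tau^3<2^{r+1}$, a point the paper's one-line proof silently passes over, so your proposed repair (strengthen the hypothesis on $r$, or derive the bound from the $\varphi$-construction alone) is exactly what is needed.
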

\begin{proof}
	The proof is a result of Theorem \ref{thm31} by taking 
	$$\alpha(x, y)=\sqrt{\theta} (\|x\|^r + \|y\|^r)$$ for all $x, y \in X$ and remarking that
	$$
	\min \left\{\frac{\theta \tau^2\|x\|^r \| z \|^r}{2^{r+1}-\tau^2}, 
	\frac{\theta \tau^2\|x\|^r \|z\|^r}{2^{r+1}-\tau^3}\right\}=\frac{\theta k^2 \|x\|^r \| z \|^r}{2^{r+1}-\tau^2} ,
	$$
	for all $x , z \in X$.
\end{proof}

\end{document}